\documentclass[lm,a4paper,twoside,fleqn,leqno,gray,md]{pre} 

\usepackage[applemac]{inputenc}
\usepackage{math}
\usepackage{microtype}


\theoremstyle{plain}
\newtheorem{lem}{Lemma}[section]
\newtheorem*{main}{\kam Theorem}
\newtheorem*{step}{Step Lemma}
\newtheorem*{iter}{Iterative Lemma}

\def\Pb{\smash{P^\circ}}
\def\Pt{\smash{\tilde P}}
\def\Ph{\smash{\hat P}}
\def\pt{\tilde p}

\def\ob{\bar\om}
\def\i {\mathrm{i}}
\def\sgt{\tilde\sg}
\let\th\theta

\begin{document}

\title  {\kam à la R}
\author {Jürgen Pöschel}
\date   {July 2010, Version 2.1}

\address{Institut für Analysis, Dynamik und Optimierung\\
         Universität Stuttgart, Pfaffenwaldring 57, D-70569 Stuttgart\\
         poschel@mathematik.uni-stuttgart.de}

\maketitle
\infotrue


In~\cite{R-09} Rüssmann proposed -- quoting from his abstract --
“a new variant of the \kam-theory, containing an artificial parameter $q$, $0<q<1$, which makes the steps of the \kam-iteration infinitely small in the limit $q\nnearrow1$. \dots\ 
The new technique of estimation differs completely from all what has appeared about \kam-theory in the literature up to date.
Only Kolmogorov's idea of local linearization and
Moser's modifying terms are left.
The basic idea is to use the polynomial structure in order to transfer, at least partially, the whole \kam-procedure outside of the
original domain of definition of the given dynamical system.”

It is the purpose of this note to make this scheme accessible in an even simpler setting, namely for analytic perturbations of constant vector fields on a torus. As a side effect the result may be the shortest complete \kam proof for perturbations of integrable vector fields available so far.

\section{Result}

Let $N$ denote a constant vector field on the \m{n}-torus $\Tn=\Rn/2\pi\Zn$ describing uniform rotational motions with frequencies $\om = (\om_1,\dots,\om_n)$. Putting $N$ into normal form, we have $N=\om$. A small perturbation
$
  X = N+P
$
usually destroys this simple flow, due to frequency drifts and the effect of resonances. If, however, the frequencies $\om$ are strongly nonresonant, the perturbation $P$ is sufficiently smooth and small, and if we are allowed to add a small correctional \m{n}-vector to adjust frequencies, then $X$ is conjugate to $\om$. This is the content of the classical \kam theorem with modifying terms for this model problem, as introduced by Moser~\cite{Mo67}.

The precise setting is the following. We consider $N$ as a vector field in normal form depending on the frequencies $\om$ as parameters. These vary in some neighbourhood of a fixed compact set $\Om\subset\Rn$ consisting of strongly nonresonant frequencies. That is, each $\om\in\Om$ satisfies
\[
  \eqlabel{dio}
  \n{\ipr{k,\om}} \ge \frac{\al}{\Dl(\n{k})}, 
  \qq 0\ne k\in\Zn,
\]
with some $\al>0$ and some \emph{Rüssmann approximation function $\Dl$}. These are continuous, increasing, unbounded functions $\Dl\maps[1,\iny)\to[1,\iny)$ such that $\Dl(1)=1$ and
\[
  \int_1^\iny \frac{\log\Dl(t)}{t^2}\dt < \iny.
\]

The perturbation $P$ is assumed to be analytic in the angles $\th\in\Tn$ and may depend analytically on the parameters~$\om$ as well. The complex domains are
\[
  D_s = \set{\th: \n{\Im\th}<s},
  \qq
  \Om_h = \set{z: \n{z-\Om}<h},
\]
where $\n{\cdd}$ denotes the \emph{max}-norm for complex vectors, while it denotes the \emph{sum}-norm for integer vectors. To simplify matters considerably, we employ the \emph{weighted norms}
\[
  \eqlabel{norm}
  \n{P}_{s,h} = \sup_{\om\in\Om_h} \sum_{k\in\Zn} \n{p_k(\om)}\e^{\n{k}s},
  \qq
  P = \sum_{k\in\Zn} p_k(\om)\e^{\i\ipr{k,\th}}.
\]
Finally, with any approximation function $\Dl$ we associate another such function $\Lm$ by setting $\Lm(t)=t\Dl(t)$.

\begin{main}
Suppose $X=N+P$ is real analytic on $D_s\x\Om_h$ with
\[
  \eqlabel{small}
  \n{P}_{s,h} = \ep < \frac{h}{16} \le \frac{\al}{32\Lm(\ta)},
\]
where $\ta$ is so large that
\[
  r \defeq 8\int_{\ta}^\iny \frac{\log\Lm(t)}{t^2} \dt < \frac{s}{2}.
\]
Then there exists a real map $\ph\maps\Om\to\Om_h$, and for each $\om\in\Om$ a real analytic diffeomorphism $\Phi_\om$ of the \m{n}-torus, such that
\[
  \Phi_\om^*(\ph(\om)+P) = \om.
\]
Moreover, $\n{\ph-\id}_\Om \le \ep$ and $\n{\Phi-\id}_{s-2r,\Om} \le \Lm(\ta)\al\inv\ep$.
\end{main}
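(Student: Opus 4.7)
The plan is to follow the classical two-tier KAM architecture: prove a one-step \emph{Step Lemma}, iterate it with a well-chosen schedule to obtain an \emph{Iterative Lemma}, and pass to the limit. Each step should take $X=N+P$ analytic on $D_s\x\Om_h$ and produce a frequency modifier $\hat\om$ together with a near-identity torus diffeomorphism $\Phi=\id+u$ so that $\Phi^*(N+\hat\om+P)=N+P^+$ with $\n{P^+}$ much smaller than $\n{P}$, at the price of shrinking $s\to s^+$ and $h\to h^+$. The total width losses across all iterations must sum to at most $2r$ in $s$ and $h$ in $h$, which is precisely why the hypothesis takes the particular form it does.

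\textbf{The Step Lemma.} The formal linearization of Kolmogorov's conjugacy equation reads $\partial_\om u=[P]-P$, with Moser's modifier $\hat\om=[P]$, and is solved term-by-term in Fourier modes with divisors $\i\ipr{k,\om}$. Because those divisors degrade like $\Dl(\n{k})/\al$, I would first truncate $P$ to its Fourier polynomial $\Pb$ of degree at most some cut-off $K$, so that the resulting $u$ is itself a trigonometric polynomial with $\n{u}_{s,h}\le C\Dl(K)\ep/\al$, while the dropped tail satisfies $\n{P-\Pb}_{s',h}\le C\e^{-(s-s')K}\ep$. R\"ussmann's central observation is that, \emph{because $u$ is polynomial}, it can be re-evaluated and re-estimated on a strip wider than the one where $P$ was originally defined: this is the transfer-outside-the-original-domain trick mentioned in the introduction, and it bypasses the usual Cauchy losses one pays when differentiating $u$ inside $D_s$. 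Composition with the flow of $u$ then produces $P^+$, bounded by the Fourier tail plus a quadratic-in-$\n{u}$ contribution.

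\textbf{Iteration and limit.} Next, I would iterate with sequences $s_j\searrow s-2r$, $h_j\searrow 0$, cut-offs $\ta_0=\ta<\ta_1<\cdots$, and a decaying $\ep_j$ governed by R\"ussmann's parameter $q\in(0,1)$. The crucial Riemann-sum estimate
\[
  \sum_{j\ge0}(s_{j-1}-s_j) \le 8\int_\ta^\iny \frac{\log\Lm(t)}{t^2}\dt = r
\]
shows that the strip losses telescope to at most $r$ precisely by R\"ussmann's integrability hypothesis on $\Dl$, with the extra factor of $t$ in $\Lm=t\Dl$ accounting for Cauchy derivatives of $u$. Passing to the limit then gives $\ph(\om)=\om+\sum\hat\om_j$ and $\Phi_\om=\lim\Phi_0\circ\cdots\circ\Phi_j$ on $D_{s-2r}$; the bound $\n{\ph-\id}\le\ep$ follows by geometric summation of the modifiers, and $\n{\Phi-\id}\le\Lm(\ta)\al\inv\ep$ because the first step dominates the total displacement.

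\textbf{Main obstacle.} I expect the delicate part to be the inductive verification that the smallness condition $\ep_j<h_j/16\le\al/(32\Lm(\ta_j))$ survives every step while $h_j$ itself shrinks to zero: one must show that the accumulated modifier $\sum\hat\om_j$ remains inside each successive $\Om_{h_j}$, and that the ratio $\ep_j/h_j$ does not blow up. This reduces to getting the dependence of $h^+$ and $\ep^+$ on $h$, $\ep$, and on $q$ exactly right in the Step Lemma -- which is where R\"ussmann's ``infinitely small steps'' philosophy, taking $q$ close to $1$ so that each individual iteration barely moves anything, must earn its keep.
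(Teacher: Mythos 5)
Your high-level architecture --- a Step Lemma, an iteration with R\"ussmann's $q$-parameter, geometric schedules for $\ep_\nu, h_\nu$, and the Riemann-sum bound $\sum\sg_\nu\le r$ coming from the integrability of $\log\Lm(t)/t^2$ --- is exactly the paper's. But you miss the one genuinely new idea that makes the step estimate close, and the version you propose in its place would not deliver the required contraction factor $q<1$.

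You suggest the \emph{standard} truncation $P=\Pb+\Ph$ with $\Pb=\sum_{\n{k}<K}p_k e_k$, and then appeal to the fact that the trigonometric polynomial $\Pb$ (and hence $F$) ``can be re-evaluated on a wider strip''. That is true, but quantitatively it costs you: for simple truncation $\n{\Pb}_{s+\sgt}\le\e^{K\sgt}\n{\Pb}_s$, which \emph{grows} as you widen the strip. The paper's decomposition is different and is the crux of the whole argument. The ultraviolet part $\Ph$ keeps a \emph{fraction} $(1-a)\e^{\n{k}\sg}$ of each low-order coefficient as well as the full high-order tail, so the infrared part has coefficients $\pt_k=(1-(1-a)\e^{\n{k}\sg})p_k$. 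The damping factor is engineered so that on the \emph{wider} strip $D_{s+\sgt}$, $\sgt=\sg(1-a)/a$, one gets the \emph{smaller} bound $\n{\Pt}_{s+\sgt}\le a\ep$ (the function $(1-(1-a)\e^{t\sg})\e^{t\sgt}$ is decreasing and equals $a$ at $t=0$), while $\n{\Ph}_{s-\sg}\le(1-a)\ep$. These two bounds feed directly into $q=(1-a+a^2b)(1+b)\e^a$: the $(1-a)$ is the ultraviolet piece, the $a^2b$ is the quadratic Lie-bracket piece (small because $\Pt$ is already of size $a\ep$ on a domain wide enough to take $\lm=1/a$ in the transport estimate, giving the mild factor $\e^a$ rather than $\e$). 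With your cruder decomposition, $\Pb$ is only of size $\ep$ on $D_s$, the allowable $\lm$ is smaller, the flow-transport factor becomes $\e^{1/\lm}\approx\e$, and the quadratic term no longer has the helpful $a^2$ in front; with $a=1/2$, $b=1/16$ you land above $1$, not below. In short: the ``new twist'' flagged in the paper's outline --- putting fractions of the low modes into $\Ph$ so that $\Pt$ is bounded on a larger domain with a \emph{better} bound --- is not cosmetic; it is what makes $q<1$ achievable at all, and your proposal omits it.

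Two smaller points. First, the paper solves $\lie{F,N}=\Pt-\Pb$ for a vector field $F$ and conjugates by the time-$1$ flow (estimated via the Lie-series Lemma~\ref{trans}), whereas you write the direct conjugacy equation $\partial_\om u=[P]-P$; both are viable, but the flow form is what the quantitative Lemma in Appendix~\ref{a-wn} is built for. Second, your ``main obstacle'' (keeping $\ep_\nu/h_\nu$ under control) is actually dissolved by the paper's choice $\ep_\nu=\ep_0q^\nu$, $h_\nu=h_0q^\nu$, $\Lm_\nu=\Lm_0q^{-\nu}$, under which $\ep_\nu\Lm_\nu$ and $\ep_\nu/h_\nu$ are constant across the iteration; the only thing one must check is $h_\nu-2a\ep_\nu\ge h_{\nu+1}$, which reduces to $\ep_0<(1-q)h_0/(2a)$, part of the hypothesis.
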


To keep things as simple as possible, we do not aim to optimize our constants, nor do we address regularity questions with respect to~$\om$. 

The above smallness condition does not depend explicitly on the dimension~$n$ of the problem. However, this dimension enters implicitly through the small divisor conditions~\eqref{dio} and Dirichlet's lemma which states that for nonresonant vectors $\om$,
\[
  \min_{0<\n{k}\le K} \n{\ipr{k,\om}} \le \frac{\n{\om}}{K^{n-1}}.
\]
Hence the approximation function $\Dl$ has to grow at a rate depending on~$n$ in order to obtain admissible frequencies. A typical example is $\Dl(t) = t^\nu$ with $\nu>n-1$.

\section{Outline}

We prove the theorem by an iterative process of successive coordinate transformations proposed by Kolmogorov~\cite{Kol}. However, at variance with the crustimoney proseedcake~\cite[Chapter IV]{WP}, we use a scheme of estimates proposed by Rüssmann, which does not rely on superlinear convergence speeds, but aims to decrease the size of the perturbation just a tiny bit at each step.

To this end, we split $P$ into an ‘infrared’ part $\Pt$ and an ‘ultraviolet’ part~$\Ph$.
However -- and this is a new twist -- $\Ph$ also contains fractions of the Fourier coefficients of \emph{low order}. As a result, $\Pt$ will be bounded on a \emph{larger domain}, with even a \emph{better bound} than $P$ itself.

The term $\Pt$ is then handled as usual. We write the coordinate transformation $\Phi$ as the time-\m1-map of the flow $F_t$ of a vector field~$F$, which solves the homological equation
$
  \lie{F,N} = \Pt-\Pb
$,
where $\Pb$ denotes the mean value of $\Pt$ and $\lie{\cd,\cd}$ the Lie bracket of two vector fields.
We obtain
\begin{align*}
  \Phi^*(N+\Pt) 
  &= \rbar{F_t^*(N+\Pt)}_{t=1} \\
  &= N+\lie{N,F} + \int_0^1(1-t)F_t^*\lie{\lie{N,F},F}\dt \\
  &\qq {} + \Pt + \int_0^1 F_t^*\lie{\Pt,F}\dt.
\end{align*}
Using the homological equation, the result is
\[
  \Phi^*(N+\Pt) 
  = N + \Pb + \int_0^1 F_t^*\lie{(t\Pt+(1-t)\Pb,F}\dt.
\]
The new constant vector field is then
\[
  N_\pl = N+\Pb = \om+p_0(\om) = \om_\pl.
\] 
A change of parameters $\ph\maps \om_\pl\mapsto\om$ transforms this back into standard normal form.
Taking into account the discarded ultraviolet term~$\Ph$, 
\[
  P_\pl = F_1^*\Ph + \int_0^1 F_t^*\lie{P_t,F}\dt,
  \qq
  P_t = t\Pt+(1-t)\Pb,
\]
is then the new perturbation of the new vector field $N_\pl$.

In carrying out the pertinent estimates, it is extremely convenient to use weighted norms such as~\eqref{norm} instead of sup-norms as it is done in~\cite{R-09}. This way, estimates on larger domains such as~\eqref{pt} and small divisor estimates such as~\eqref{Fs} are immediate and do not require results from harmonic analysis. Indeed, it could be argued that sup-norms should be converted to weighted norms before entering the \kam machinery -- once inside they are rather clumsy, inefficient, and produce a lot of unwieldy constants.

\section{Step lemma}

Before proceeding to the details we note that by a proper scaling of time and hence of the vector fields we can assume that the small divisor conditions~\eqref{dio} hold with the normalized value
$\al=2$. 

In the following everything will be \emph{real analytic} without explicitly saying so.

\begin{step}
Let $0<\sg<s/2$ and $\ta\ge1$. Set $a=1-\e^{-\ta\sg}$ and assume
\[
  \eqlabel{hyp}
  \n{P}_{s,h} \le \ep < \min \set{\dfrac{h}{2a}, \dfrac{1}{2\Lm(\ta)}},
  \qq
  h \le \frac{1}{\Lm(\ta)}.
\]
Then there exist parameter and coordinate transformations
$\ph\maps\Om_{h-2a\ep}\to\Om_h$ and $\Phi\maps D_{s-2\sg}\to D_s$
which together transform $X=N+P$ into $X_\pl = N+P_\pl$ with
\[
  \n{P_\pl}_{s-2\sg,h-2\ep} \le q\ep,
\]
where
\[
  q = (1-a+a^2b)(1+b)\e^a,
  \qq
  b = \Lm(\ta)\ep.
\]
Moreover, 
$
  \n{\ph-\id}_{h-2a\ep} \le a\ep
$ and $
  \n{\Phi-\id}_{s-2\sg,h-2a\ep} \le \Lm(\ta)\sg\ep
$.
\end{step}

\begin{proof}
Let $P = \Pt+\Ph$ with
\[
  \Ph
  = \sum_{\n{k}\ge\ta} p_k \e^{\i\ipr{k,\th}}
    + (1-a)\sum_{\n{k}<\ta} p_k\e^{\n{k}\sg}\e^{\i\ipr{k,\th}}.
\]
Clearly, in view of $\e^{-\ta\sg}=1-a$,
\[
  \n{\Ph}_{s-\sg} \le (1-a)\n{P}_s \le (1-a)\ep,
\]
where we dropped the ›$h$‹ from the notation since it stays fixed until the very last paragraph of this section.
On the other hand, the polynomial rest
\[
  \Pt = \sum_{0<\n{k}<\ta} \pt_k\e^{\i\ipr{k,\th}},
  \qq
  \pt_k = (1-(1-a)\e^{\n{k}\sg})p_k,
\]
is bounded on a \emph{larger} domain. Indeed, with $\sgt = \sg(1-a)/a$, 
\[
  \eqlabel{pt}
  \n{\Pt}_{s+\sgt}
   \le \sup_{0\le t\le\ta}\pas*{1-(1-a)\e^{t\sg}}\e^{t\sgt} \! 
       \sum_{\n{k}<\ta} \n{p_k}\e^{\n{k}s}
   \le a\ep,
\]
as the function under the sup is monotonically decreasing for $0\le t\le\ta$ and equals $a$ at~$t=0$.

The linearized equation $\lie{F,N} = \Pt-\Pb$ is solved as usual by
\[
  F = \sum_{0<\n{k}<\ta} \frac{\pt_k}{\i\!\ipr{k,\om}}\e^{\i\ipr{k,\th}}.
\]
For any $\om\in\Om_h$ there is $\ob\in\Om$ with $\n{\om-\ob}<h\le1/\Lm(\ta)$ by~\eqref{hyp} and hence, in view of $\Lm(\ta)=\ta\Dl(\ta)$,
\[
  \n{\ipr{k,\om-\ob}} 
  \le \n{k}\n{\om-\ob}
  \le \ta h
  \le \frac{\ta}{\Lm(\ta)}
  = \frac{1}{\Dl(\ta)}.
\]
As $\ob$ satisfies~\eqref{dio} with $\al=2$, all relevant divisors thus admit the lower bound
\[
  \n{\ipr{k,\om}}
  \ge \n{\ipr{k,\ob}}-\n{\ipr{k,\om-\ob}}
  \ge \frac{2}{\Dl(\ta)} - \frac{1}{\Dl(\ta)}
  =   \frac{1}{\Dl(\ta)}.
\]
So with~\eqref{pt} and $a=1-\e^{-\ta\sg}\le \ta\sg$ we get
\[
  \eqlabel{Fs}
  \n{F}_{s+\sgt}
  \le \Dl(\ta) \n{\Pt}_{s+\sgt} 
  \le \Dl(\ta)a\ep
  \le \Lm(\ta)\sg\ep.
\]
In particular, $\n{F}_{s+\sgt} \le \sg$ by hypothesis~\eqref{hyp}, so the vector field~$F$ generates a flow $F_t$ that for $0\le t\le1$ satisfies
\[
  F_t\maps D_{s-2\sg}\to D_{s-\sg},
  \qq
  \n{F_t-\id}_{s-2\sg} \le \Lm(\ta)\sg\ep.
\]

To estimate $P_\pl$ we note that 
$\n{P_t}_{s+\sgt} \le t\n{\Pt}_{s+\sgt}+(1-t)\n{\Pb}_{s+\sgt} \le a\ep$ and
\[
  (s+\sgt)-(s-\sg) = \frac{1-a}{a}\sg+\sg = \frac{\sg}{a}.
\]
Lemma~\ref{lie} of Appendix~\ref{a-wn} and the abbreviation $b = \Lm(\ta)\ep$ thus yield
\[
  \n1{\lie{P_t,F}}_{s-\sg}
  \le \frac{a}{\sg} \n{P_t}_{s+\sgt}\n{F}_{s+\sgt} 
  \le \Lm(\ta)a^2\ep^2
  = a^2b\ep.
\]
In view of~\eqref{Fs} we can apply Lemma~\ref{trans} of Appendix~\ref{a-wn} with $r=s-\sg$ and $\lm=1/a$ to obtain
\[
  \int_0^1 \n1{F_t^*\lie{P_t,F}}_{s-2\sg}\dt
  {}\le  (1+b)\e^a\n1{\lie{P_t,F}}_{s-\sg} 
  {}\le  a^2b(1+b)\e^a\ep.
\]
Similarly,
\[
  \n1{F_1^*\Ph}_{s-2\sg}
  \le (1+b)\e^a \n{\Ph}_{s-\sg} 
  \le (1-a)(1+b)\e^a\ep.
\]
Both estimates together yield the stated estimate of $P_\pl$.

Finally, $N_\pl = N+\Pb$ has frequencies $\om_\pl = \om+p_0(\om)$.
As 
\[
  \n{p_0}_h \le \n{\Pt}_{s+\sgt,h} \le a\ep < h/2
\]
by hypothesis~\eqref{hyp}, the map $\om\mapsto\om_\pl$ has an inverse 
\[
  \ph\maps\Om_{h-2a\ep}\to\Om_{h-a\ep}, \q \om = \ph(\om_\pl),
\] 
satisfying $\n{\ph-\id}_{h-2a\ep} \le a\ep$ by Lemma~\ref{inv} of Appendix~\ref{a-inv}. Then $N_\pl$ is again in standard normal form, and the proof of the Step Lemma is complete.
\end{proof}

\section{Iteration and convergence}

Iterating the Step Lemma is simple. We can always choose $0<a<1$ and $0<b\le1/2$ so that
\[
  q = (1-a+a^2b)(1+b)\e^a < 1,
\]
and we can even make $q$ as close to $1$ as we wish.
It then suffices to choose for $\ep$, $h$ and $\Lm$ geometric sequences with the same base~$q$, namely
\[
  \ep_\nu = \ep_0q^\nu, \qq
  h_\nu = h_0q^\nu, \qq
  \Lm_\nu = \Lm_0q^{-\nu},
\]
where we assume that $\Lm_0 \ge \Lm(1)=\Dl(1)$.
Next, let $\ta_\nu = \sup\set{\ta: \Lm(\ta)\le\Lm_\nu}$ and define $\sg_\nu$ and $s_\nu$ through
\[
  1-a = \e^{-\ta_\nu\sg_\nu},
  \qq
  s_{\nu+1}=s_\nu-2\sg_\nu.
\]
As we will see in a moment, the $s_\nu$ have a positive limit for $\Lm_0$ sufficiently large.

\begin{iter}
Suppose that 
\[
  \n{P}_{s_0,h_0} \le \ep_0 < \min \set{\dfrac{1-q}{2a} h_0 , \dfrac{b}{\Lm_0}},
  \qq
  h_0 \le \frac{1}{\Lm_0},
\]
with $\Lm_0$ sufficiently large. Then for each $\nu\ge1$ there exists a parameter and coordinate transformation 
\[
  (\Phi_\nu,\ph_\nu)\maps D_{s_\nu}\x\Om_{h_\nu} \to D_{s_0}\x\Om_{h_0}
\]
which transforms $N+P_0$ into $N+P_\nu$ such that $\n{P_\nu}_{s_\nu,h_\nu} \le \ep_\nu$.
\end{iter}

\begin{proof}
This follows by applying the Step Lemma repeatedly and composing the resulting mappings. Just note that $\Lm(\ta_\nu) \le \Lm_\nu$ and
\[
  \ep_\nu \Lm_\nu = \ep_0\Lm_0 \le b \le 1/2,
  \qq
  \frac{h_\nu-2a\ep_\nu}{h_{\nu+1}} = \frac{h_0-2a\ep_0}{qh_0} \ge 1,
\]
for all $\nu$ by construction and hypotheses.
\end{proof}

\begin{proof}[Proof of the KAM Theorem]
Recall that we normalized $\al=2$. Then the hypotheses of the Iterative Lemma are satisfied by $P_0=P$ with $\ep_0=\ep$, $h_0=h$, $s_0=s$ and the same $\Lm_0$ as above.

To be able to apply this lemma infinitely often we have to verify that the $s_\nu$ tend to a positive limit. Indeed, 
\[
  \smash[b]{\sum_{\nu\ge1}} \frac{1}{\ta_\nu}
  \le \int_0^\iny \frac{\upd\nu}{\Lm\inv(\Lm_0 q^{-\nu})}
  =   \frac{1}{\log q\inv} \int_{\ta_0}^\iny \frac{\upd\Lm(t)}{t\Lm(t)}
\]
via letting $t = \Lm\inv(\Lm_0 q^{-\nu})$.
Integrating by parts and requiring $\Lm(\ta_0)\ge q\inv$ we get
\[
  \sum_{\nu\ge0} \frac{1}{\ta_\nu}
  \le \frac{1}{\log q\inv} \int_{\ta_0}^\iny \frac{\log\Lm(t)}{t^2}\dt. 
\]
It follows that
\[
  r
  \defeq \sum_{\nu\ge0} \sg_\nu
  = \sum_{\nu\ge0} \frac{\log(1-a)\inv}{\ta_\nu}
  \le \frac{\log(1-a)}{\log q} \int_{\ta_0}^\iny \frac{\log\Lm(t)}{t^2}\dt. 
\]
Hence, by choosing $\ta_0$ sufficiently large, we can achieve that $r<s/2$
and thus $s_\nu \ssearrow s-2r > 0$.

For the statement of the theorem we choose $a=1/2$ and $b=1/16$, which results in
\[
  q \approx \frac{9}{10}, \qq \frac{\log(1-a)}{\log q} \le 8.
\]
In fact, $7$ instead of $8$ would also do, but  we prefer powers of~$2$.

Now fix $\om\in\Om$, and consider the sequence of transformations provided by the Iterative Lemma for this~$\om$. On $D_r$ the family $(\Phi_\nu)$ is uniformely bounded, as is the vector sequence $(\ph_\nu)$. So by Montel's and Weierstrass' theorem, there is a convergent subsequence. As 
$
  \n{P_\nu}_{s_\nu,h_\nu} \le \ep_\nu \to 0
$
along any such subsequence, this subsequence transforms $X=N+P$ at the limit parameter value $\tilde\om$ into the normal form~$N$ at $\om$, that is, the constant vector field~$\om$.
\end{proof}

\appendix

\section{An inverse function theorem}  \label{a-inv}

\begin{lem} \label{inv}
Suppose $f\maps\Om_{h}\to\Cn$ is analytic and 
\[
  \n{f-\id}_h \le \ep < h/2.
\] 
Then $f$ has an analytic inverse $\ph\maps\Om_{h-2\ep}\to\Om_h$, and $\n{\ph-\id}_{h-2\ep}\le\ep$.
\end{lem}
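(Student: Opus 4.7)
The plan is to construct $\ph$ as the pointwise fixed point of the map $T_z(\om) := z - g(\om)$, where $g := f - \id$ satisfies $\n{g}_h \le \ep$. Solving $f(\ph(z)) = z$ is equivalent to solving $\ph(z) = T_z(\ph(z))$. For $z \in \Om_{h-2\ep}$, the closed polydisc $\set{\om : \n{\om-z} \le \ep}$ is contained in $\Om_{h-\ep} \subset \Om_h$, so $T_z$ is well defined on this ball and maps it into itself, since $\n{T_z(\om)-z} = \n{g(\om)} \le \ep$. The main task is to show $T_z$ is a strict contraction there.

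For this I would use a directional Cauchy estimate: for any $\om_0 \in \Om_{h-r}$ and any unit vector $v \in \Cn$, the function $F(t) = g(\om_0 + tv)$ is analytic on the complex disc $|t|<r$ and bounded by $\ep$, so Cauchy's formula gives $\n{Dg(\om_0)v} = \n{F'(0)} \le \ep/r$. Varying $v$ yields the operator-norm bound $\n{Dg(\om_0)} \le \ep/r$, crucially without any factor of $n$. Openness of $\Om_{h-2\ep}$ supplies some $\rho>0$ with $z \in \Om_{h-2\ep-\rho}$, so the iterates of $T_z$ stay in $\Om_{h-\ep-\rho}$, on which $\n{Dg} \le \ep/(\ep+\rho) < 1$. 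Integration along straight segments converts this into a Lipschitz estimate with the same constant, and Banach's fixed-point theorem produces a unique $\ph(z)$ with $f(\ph(z))=z$ and $\n{\ph(z)-z}\le\ep$.

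Analyticity of $\ph$ would follow by observing that the Picard iterates $\om_0 = z$, $\om_{k+1}=T_z(\om_k)$ are analytic in $z$ and converge to $\ph$ uniformly on each compact subset $K\subset\Om_{h-2\ep}$, where $\rho$ can be chosen uniformly on~$K$. Weierstrass's theorem on uniform limits of holomorphic maps then yields analyticity on all of $\Om_{h-2\ep}$.

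The main obstacle is obtaining the contraction constant without a factor of $n$: a naive componentwise Cauchy estimate on polydiscs would produce an operator bound like $n\ep/(\ep+\rho)$, useless in the sharp regime $\ep<h/2$. The directional argument along complex lines sidesteps this completely. A minor secondary subtlety is that the contraction rate is not uniform on all of $\Om_{h-2\ep}$, but this only forces the fixed-point construction to be carried out locally, which still suffices for existence, the bound, and analyticity via Weierstrass.
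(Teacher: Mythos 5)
Your proof is correct, and it rests on the same two ingredients as the paper's: the Cauchy estimate $\n{Dg}\le\ep/r$ on $\Om_{h-r}$ (your ``directional'' argument along complex lines is exactly what makes this dimension-free with the max-norm convention, and is what the paper's terse ``by Cauchy's inequality'' amounts to) and a Banach fixed-point argument for the inverse. The one genuine implementation difference is where the contraction is run. The paper treats $T\ph = \id - (f-\id)\circ\ph$ as a contraction on the Banach space of analytic maps $\ph\maps\Om_k\to\Om_h$ with $\n{\ph-\id}_k\le\ep$, for $0<k<h-2\ep$, and then lets $k\nnearrow h-2\ep$; since the whole iteration lives in a space of analytic maps, analyticity of the fixed point is automatic and there is no separate Weierstrass step. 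You instead contract pointwise in $\Cn$ for each $z$, so you have to add a short extra argument (uniform convergence of the Picard iterates on compacts plus Weierstrass) to recover analyticity. Both routes are legitimate and of comparable length; the function-space version is marginally tidier because it absorbs the analyticity bookkeeping and the non-uniform contraction rate into the choice of $k<h-2\ep$, whereas your version deals with the latter by shrinking to $\Om_{h-2\ep-\rho}$.
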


\begin{proof}
For any $0<k<h-2\ep$ we have
\[
  \n{Df-I}_{k+\ep} \le \frac{\ep}{h-(k+\ep)} < 1
\]
by Cauchy's inequality. Therefore, the operator
\[
  T\maps \ph \mapsto \id -(f-\id)\comp\ph
\]
defines a contraction on the space of analytic maps $\ph\maps\Om_{k}\to\Om_h$, $\n{\ph-\id}_{k}\le\ep$. Its unique fixed point $\ph$ is the analytic inverse to $f$ on $\Om_k$. Letting $k\to h-2\ep$ we obtain the claim.
\end{proof}

\section{Weighted norms}  \label{a-wn}

\begin{lem} \label{lie}
Let $U$ and $V$ be analytic vector fields on the torus~$\Tn$. Then,
for $0<r<\min\set{u,v}$,
\[
  \n1{\lie{U,V}}_{r}
  \le 
  \frac{1}{\e} \pas{\frac{1}{u-r}+\frac{1}{v-r}} \n{U}_{u}\n{V}_{v}.
\]
\end{lem}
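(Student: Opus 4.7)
The plan is to reduce the estimate to Fourier coefficient bookkeeping. Writing the Lie bracket componentwise as $\lie{U,V}^i = \sum_j (U^j\partial_j V^i - V^j\partial_j U^i)$ and noting that $\partial_j$ multiplies $V^i_{k''}$ by $\i k''_j$, the convolution rule gives
\[
  [U,V]^i_k = \i\sum_{k'+k''=k} \pas{\ipr{k'',U_{k'}} V^i_{k''} - \ipr{k'',V_{k'}} U^i_{k''}}.
\]
The key observation is the elementary duality $\n{\ipr{k'',u}}\le\n{k''}\n{u}$, which is where the paper's norm convention (sum-norm on integer vectors, max-norm on complex vectors) pays off: it collapses the inner sum over $j$ without losing a factor of~$n$.

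After this, I would take the max over~$i$, multiply by $\e^{\n{k}r}$, apply $\n{k}\le\n{k'}+\n{k''}$, and sum over all $k',k''$. The resulting double sum factors cleanly into a product of two series: an \emph{undifferentiated} one $\sum_{k'}\n{U_{k'}}\e^{\n{k'}r}$, which is monotonically bounded by $\n{U}_u$ for $r<u$, and a \emph{differentiated} one $\sum_{k''}\n{k''}\n{V_{k''}}\e^{\n{k''}r}$. The latter is estimated by the elementary Cauchy-type bound $\n{k''}\e^{-\n{k''}(v-r)}\le 1/(\e(v-r))$, where the maximum of $x\mapsto x\e^{-x(v-r)}$ at $x=1/(v-r)$ produces exactly the factor $1/\e$ in the claim. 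This yields the first half of the inequality; the second half of the bracket contributes the symmetric term with $u$ and $v$ swapped, giving the full bound.

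I do not anticipate any serious analytic obstacle: the argument is essentially two lines of Fourier bookkeeping, provided one is careful with the sum-norm/max-norm duality noted above and with commuting the $\sup_\om$ past the factorization of the double series (which is legitimate since a supremum of a product of non-negative quantities is bounded by the product of suprema, applied \emph{after} the double sum has been split). The factor of $1/\e$ in the statement is the only quantitative point requiring thought; it is precisely the price of converting one power of $\n{k}$ into a loss of analyticity radius $v-r$.
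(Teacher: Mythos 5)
Your argument is correct and is essentially the paper's own proof: both compute the Fourier coefficients of the bracket as a convolution, both split $\n{k}\le\n{k'}+\n{k''}$ to factor the double sum, and both extract the factor $1/\e$ from $\sup_{t\ge0}t\e^{-ct}=1/(\e c)$. The only cosmetic difference is that you start from the componentwise formula $[U,V]^i=\sum_j(U^j\partial_jV^i-V^j\partial_jU^i)$ and make the $\ell^1$--$\ell^\infty$ duality $\n{\ipr{k'',u}}\le\n{k''}\n{u}$ explicit, whereas the paper works directly with $DU\cdot V$ and $DV\cdot U$; also, the remark about $\sup_\om$ is moot here since this lemma carries no parameter dependence.
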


\begin{proof}
We have
$U = \sum_{k} u_ke_k$ and $V = \sum_k v_ke_k$
with $e_k = \e^{\i\ipr{k,\th}}$. Therefore,
\[
  DU\cd V
   = \sum_{k} \i u_k\ipr{k,V}e_k 
  {}= \sum_{k,l} \i u_k\ipr{k,v_l}e_{k+l}  
  {}= \sum_{k,l} \i u_k\ipr{k,v_{l-k}}e_l
\]
and thus
\begin{align*}
  \n{DU\cd V}_{r}
  &\le \sum_{k,l} \n{k}\n{u_k}\n{v_{l-k}}\e^{\n{l}r} \\
  &\le \sum_{k,l} \n{k}\n{u_k}\e^{\n{k}r}\n{v_{l-k}}\e^{\n{l-k}r} 
   \displaybreak[0] \\
  &\le \sup_{t\ge0} t\e^{-(u-r)t}
       \smash{\pas3{\sum_{k} \n{u_k}\e^{\n{k}u}}}
       \smash{\pas3{\sum_{l} \n{v_{l}}\e^{\n{l}v}}}
       \\
  &\le \frac{1}{\e(u-r)}\n{U}_u\n{V}_v.
\end{align*}
Exchanging the roles of $U$ and $V$ we get an analogous estimate for $\n{DV\!\cd U}_r$ which proves the claim.
\end{proof}

\begin{lem} \label{trans}
Suppose the vector fields $F$ and $V$ are analytic on the torus~$\Tn$. 
If $b = \sg\inv \n{F}_{r+\lm\sg} \le 1/2$ with $0<\sg<r$ and $\lm>0$, then
\[
  \n{F_t^*V}_{r-\sg} \le (1+bt)\e^{1/\lm}\n{V}_r , 
  \qq
  0\le t\le 1.
\]
\end{lem}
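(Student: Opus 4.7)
The plan is to expand the pullback as a Taylor series in $t$. The flow identity $\frac{d}{dt}F_t^*V = F_t^*\{V,F\}$, iterated, gives
\[
  F_t^* V = \sum_{k\ge 0} \frac{t^k}{k!}\, L^k V,\qquad L V := \{V,F\},
\]
so the problem reduces to bounding $\|L^kV\|_{r-\sigma}$ in the weighted norm for each $k$ and summing the series.

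Each iterated bracket I would estimate by applying Lemma~\ref{lie} once per step along a chain $r_0 = r > r_1 > \cdots \searrow r-\sigma$, with the critical design choice of keeping the $F$-factor on its full domain $D_{r+\lambda\sigma}$ at every step. This way each application of Lemma~\ref{lie} contributes an $F$-side Cauchy loss of at most $1/(\lambda\sigma)$, and accumulating $k$ such factors through the Taylor series is what will produce the $e^{1/\lambda}$ after division by $k!$; only the $V$-factor's radius needs to shrink in the chain, which is precisely the role of the buffer $\lambda\sigma$ in the hypothesis.

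Splitting off the terms $k=0,1$ handles the $(1+bt)$ part directly: a single application of Lemma~\ref{lie} with $u=r$ and $v=r+\lambda\sigma$ yields $\|\{V,F\}\|_{r-\sigma} \le (2/e)\,b\,\|V\|_r \le b\,\|V\|_r$, so $\|V\|_{r-\sigma} + t\,\|\{V,F\}\|_{r-\sigma} \le (1+bt)\|V\|_r$. For the tail $\sum_{k\ge 2}$ I would use uniform divisions $r_j - r_{j+1} = \sigma/k$ (a fresh choice for each $k$), making the $V$-Cauchy factor per step equal to $k/\sigma$; the resulting $k$th term then has the shape $(tb)^k(k+1/\lambda)^k/(e^kk!) \le (tb)^k(1+1/(k\lambda))^k (k/e)^k/k!$, and Stirling's bound $(k/e)^k/k! \le 1/\sqrt{2\pi k}$ collapses the tail to a convergent geometric series in $bt$ times at most $e^{1/\lambda}$, which absorbs into the stated shape.

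The principal technical obstacle is this calibration: shrinking $V$'s radius too quickly inflates $1/(r_j-r_{j+1})$, too slowly and the chain never reaches $r-\sigma$. Producing the clean form $(1+bt)e^{1/\lambda}$ rather than a coarser bound like $e^{1/\lambda}/(1-bt)$ requires isolating the low-order terms and applying the $\lambda\sigma$-buffer trick uniformly on the tail; the bulk of the technical labor lives in the constant-tracking needed to land exactly on the stated expression.
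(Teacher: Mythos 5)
Your overall strategy is exactly the paper's: expand $F_t^*V$ as a Lie series, bound each iterated bracket $V_k$ by a chain of Cauchy estimates with \emph{uniform} radius losses $\sigma/k$ along a fresh subdivision for each $k$, and keep the $F$-factor on its full domain $D_{r+\lambda\sigma}$ at every step so that each application of Lemma~\ref{lie} costs $1/(\lambda\sigma)$ on the $F$-side. Your formula $(tb)^k(k+1/\lambda)^k/(e^k k!)$ for the $k$th term is correct.

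The gap is in the final bookkeeping, specifically in the decision to split off $k=0,1$ from the tail. Bounding the low-order terms by $\n{V}_{r-\sigma}+t\n{\{V,F\}}_{r-\sigma}\le(1+bt)\n{V}_r$ (after coarsening $(2/e)b\le b$) leaves no factor $\e^{1/\lambda}$ on this piece. The tail carries a factor $\e^{1/\lambda}$, so the only room for it is the slack $(1+bt)(\e^{1/\lambda}-1)\n{V}_r$ -- and this slack tends to zero as $\lambda\to\infty$, while the tail does not. Concretely, take $bt=1/2$ and $\lambda=100$: your $k\le1$ bound gives $1.5\n{V}_r$ while the target $(1+bt)\e^{1/\lambda}\n{V}_r\approx1.515\n{V}_r$, so the tail must fit in $0.015\n{V}_r$; but $\e^{1/\lambda}\sum_{k\ge2}(bt)^k/\sqrt{2\pi k}\approx0.12$. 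The inequality fails for all $\lambda$ larger than roughly $8$ when $bt$ is near $1/2$. The root cause: your $k=1$ coarsening throws away a crucial factor. A tight bound for $k=1$ is $\n{\{V,F\}}_{r-\sigma}\le\tfrac{1}{\e}\bigl(1+\tfrac{1}{1+\lambda}\bigr)b\,\n{V}_r\le\tfrac{\e^{1/\lambda}}{\e}\,b\,\n{V}_r$, which is roughly $b\n{V}_r/\e$, not $b\n{V}_r$.

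The paper closes this by \emph{not} separating $k=1$: it treats all $n\ge1$ uniformly via the elementary inequality $n!\ge n^n/\e^{n-1}$ (no Stirling), which gives $\frac{1}{n!}\bigl(\frac{nbt}{\e}\bigr)^n\le\frac{(bt)^n}{\e}$ for every $n\ge1$. Then $\sum_{n\ge0}\frac{1}{n!}\bigl(\frac{nbt}{\e}\bigr)^n\le1+\frac{bt}{\e(1-bt)}\le1+\frac{2bt}{\e}\le1+bt$, and the factor $\e^{1/\lambda}$ from $\prod(1+\frac{1}{\lambda n})^n\le\e^{1/\lambda}$ is pulled out in front of the \emph{entire} sum (including $n=0$, where $\e^{1/\lambda}\ge1$ costs nothing). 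This yields $(1+bt)\e^{1/\lambda}\n{V}_r$ cleanly. So, contrary to your closing remark, isolating the low-order terms is not what produces the clean form -- it is what prevents it; the clean form comes from the uniform $1/\e$ in each term $n\ge1$ together with the global $\e^{1/\lambda}$ prefactor.
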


\begin{proof}
We have the Lie series expansion
\[
  F_t^*V = \sum_{n\ge0} \frac{1}{n\fac}V_nt^n
\]
with $V_0=V$ and $V_n = \lie{V_{n-1},F}$ for $n\ge1$.
Let $\n{\cd}_i = \n{\cd}_{r-i\sg/n}$ for $0\le i\le n$.
Then, by the preceding lemma,
\begin{align*}
  \n{V_n}_{r-\sg}
  &=\n{V_n}_n
   = \n{\lie{V_{n-1},F}}_n \\
  &\le \pas{\frac{n}{\e\sg}+\frac{1}{\e\lm\sg}} \n{V_{n-1}}_{n-1} \n{F}_{r+\lm\sg} \\
  &= \frac{n}{\e\sg} \pas{1+\frac{1}{\lm n}} \n{V_{n-1}}_{n-1} \n{F}_{r+\lm\sg}.
\end{align*}
Applying this step $n$ times, we get
\[
  \n{V_n}_{r-\sg} \le \pas{\frac{n}{\e\sg}}^n \e^{1/\lm} \n{V}_r\n{F}_{r+\lm\sg}^n.
\]
Summing up and replacing $\sg\inv \n{F}_{r+\lm a}$ by $b$ we obtain
\[
  \n{F_t^*V}_{r-\sg}
  \le \n{V}_r \e^{1/\lm} \sum_{n\ge0} \frac{1}{n\fac} \pas{\frac{nb t}{\e}}^n .
\]
With $n\fac \ge n^n/\e^{n-1}$ for $n\ge1$ and $0\le bt\le 1/2$  the last sum is bounded by
\[
  1 + \sum_{n\ge1} \frac{(bt)^n}{\e} \le 1+bt.
  \qedhere
\]
\end{proof}

If $V$ depends on parameters $\om$ in a point set $\Pi$, we define the norm 
$\n{V}_{s,\Pi}$ as in~\eqref{norm}.
All the preceding estimates are uniform with respect to such parameters, so the results extend to this case.

\medskip

\textit{Acknowledgement.}
It is a pleasure to thank the referees for pointing out a number of imprecisions, in particular a slight misquotation of Winnie-the-Pooh and a mistaken application of my non-existent French.

\vskip2pc
\footnotesize
\textit{MSC 2010:} 37J40, secondary 70H08, 70K43.
\par
\textit{Keywords:} \kam theory, invariant tori, small divisors, weighted norms.
\vskip2pc

\putaddress

\end{document}